\newtheorem{theorem}{Theorem}[section]
\newtheorem{lemma}[theorem]{Lemma}
\newtheorem{remark}{Remark}
\numberwithin{equation}{section}
\newcommand{\rw}{\rightarrow}
\renewcommand{\d}{\delta}
\newcommand{\D}{\Delta}
\newcommand{\ra}{\rightarrow}
\newcommand{\f}{\frac}
\newcommand{\al}{\alpha}
\renewcommand{\l}{\lambda}
\renewcommand{\L}{\Lambda}
\newcommand{\be}{\begin{equation}}
\renewcommand{\ra}{\rightarrow}
\newcommand{\ee}{\end{equation}}
\newcommand{\bea}{\begin{eqnarray}}
\newcommand{\eea}{\end{eqnarray}}
\newcommand{\bna}{\begin{eqnarray*}}
\newcommand{\ena}{\end{eqnarray*}}
\renewcommand{\o}{\omega}
\newcommand{\iv}{\int_{V}}
\renewcommand{\le}{\left}
\newcommand{\ri}{\right}
\newcommand{\ve}{\vert}
\newcommand{\V}{\Vert}
\newcommand{\na}{\nabla}
\newcommand{\upl}{\upsilon'_{\lambda}}
\newcommand{\up}{\upsilon}
\journal{***}
\begin{document}

\begin{frontmatter}

\title{ Existence of solutions to Chern-Simons-Higgs equations on graphs}

\author{Songbo Hou \corref{cor1}}
\ead{housb@cau.edu.cn}
\address{Department of Applied Mathematics, College of Science, China Agricultural University,  Beijing, 100083, P.R. China}
\author{Jiamin Sun}
\ead{1416525364@qq.com}
\address{Department of Applied Mathematics, College of Science, China Agricultural University,  Beijing, 100083, P.R. China}

\cortext[cor1]{Corresponding author: Songbo Hou}

\begin{abstract}
Let $G=(V,E)$ be a finite graph.  We  consider the existence of solutions to a generalized Chern-Simons-Higgs equation
$$
\Delta u=-\lambda e^{g(u)}\left( e^{g(u)}-1\right)^2+4\pi\sum\limits_{j=1}^{N}\delta_{p_j}
$$
on $G$, where $\lambda$ is a positive constant; $g(u)$ is the inverse function of $u=f(\upsilon)=1+\upsilon-e^{\upsilon}$ on $(-\infty, 0]$; $N$ is a positive integer;  $p_1, p_2, \cdot\cdot\cdot, p_N$ are distinct vertices of $V$
and $\delta_{p_j}$ is  the Dirac delta mass at $p_j$.
We prove that there is critical value  $\lambda_c$ such that the generalized Chern-Simons-Higgs equation has a solution if and only if $\l\geq \l_c$ . We also prove the existence of solutions to the Chern-Simons-Higgs equation
 $$
\Delta u=\lambda e^{u}(e^{u}-1)+4\pi\sum\limits_{j=1}^{N}\delta_{p_j}
$$
on $G$ when $\lambda$ takes the critical value $\lambda_c$ and this completes the results of An Huang, Yong Lin and Shing-Tung Yau (Commun. Math. Phys. 377, 613-621 (2020)).

\end{abstract}

\begin{keyword}  finite graph\sep Chern-Simons-Higgs equation\sep  upper and lower solution.

\MSC [2010] 35J91, 05C22

\end{keyword}

 \end{frontmatter}

\section{Introduction}

The PDE  is an important tool in various applications on graphs, such as filtering, denoising, segmentation
and clustering, et al. \cite{AE12, LJ10}. The study of PDEs on graphs has become an interesting subject.

We recall the definition of a graph.  Let $V$ be the vertex set and $E$ be the edge set. We use  $G=(V,E)$ to denote a finite graph. We assume that for each edge $xy\in E$, its weight $\o_{xy}$ satisfies $\o_{xy}>0$ and $\o_{xy}=\o_{yx}$.
For any $x,y\in E$, we assume that they can be connected via finite edges and then $G$ is called a connected graph. Let $\mu :V\ra \mathbb{R}^{+}$ be a finite measure. For any function $u:V\ra \mathbb{R}$, the
$\mu$-Laplace is defined by
$$\D u(x)=\f{1}{\mu(x)}\sum\limits_{y\sim x}\omega_{xy}(u(y)-u(x)),$$ where $y\sim x$ means $xy\in E$.  For a pair of functions $u$ and $\up$, the gradient form is defined by
$$\Gamma(u,\up)=\f{1}{2\mu(x)}\sum\limits_{y\sim x}\omega_{xy}(u(y)-u(x))(\up(y)-\up(x)).$$
When $u=\up$, we write $\ve \na u\ve^2=\Gamma(u,u)$. For a function $u:V\ra \mathbb{R}$, we denote the integral over $V$ by
$$\int_V ud\mu=\sum\limits_{x\in V}\mu(x)u(x).$$   In order to study the PDEs on graphs, we also need to define the Sobolev space and the norm.  Let $W^{1,2}(V)$ be the space of functions $u:V\rw \mathbb{R}$ satisfying

$$\iv\le(\ve \na u\ve^2 +u^2\ri)d\mu<+\infty.$$
The norm of $u$ in $W^{1,2}(V)$ is defined as
$$\V u\V_{W^{1,2}(V)}=\le(\iv\le(\ve \na u\ve^2 +u^2\ri)d\mu\ri)^{1/2}.$$

Recently, the methods to deal with the PDEs in the Euclidean settings were performed on graphs successfully.  Grigor'yan, Lin and  Yang \cite{GLY16, GLY17} used the variation method and  the mountain-pass theorem to obtain the  existence results for some nonlinear equations on gaphs. The similar method was used by the first author to prove the existence of multiple solutions to a nonlinear biharmonic equation on graphs \cite{Hou21}.
Using the the Nehari method,  Zhang and Zhao \cite{ZZ18} studied the convergence of ground state solutions for nonlinear
Schr\"{o}dinger equations on graphs. The Nehari method was also used by Han, Shao and Zhao \cite{HSZ}  to obtain  the existence and convergence of solutions for nonlinear biharmonic equations on graphs. For the Kazdan-Warner equation, using the calculus of variations and a method of upper and lower solutions, Grigor'yan, Lin and  Yang \cite{GLY} obtained the existence of solutions under various conditions. Keller and Schwarz \cite{KS18} studied the Kazdan-Warner equation on canonically compactifiable graphs and proved the existence results under various conditions.  Liu and Yang \cite{LY20} studied the multiple solutions of Kazdan-Warner equation  in the negative case.  The existence of the solutions to the  $p$-th Kazdan-Warner equation was studied in \cite{Ge20, ZC18}. Related results also include \cite{GJ18, GHJ, LW18, LW17, ZL19} and so on.

 There are a lot of study about vortices in (2 + 1)-dimensional Chern-Simons gauge theory in recent years. Burzlaff, Chakrabarti and Tchrakian \cite{BCT92}  proposed certain generalisations  of  the self-dual Chern-Simons-Higgs model \cite{HKP90, JW90}, whereas the existence of double periodic vortices for the models
had been an open problem.
  Han  \cite{Han14} reduced the generalized
self-dual Chern-Simons-Higgs equation to  a semilinear elliptic equation. Then  he solved the existence problem by an upper-lower solution method.

In this paper, we first consider the  generalized Chern-Simons-Higgs equation derived by Han \cite{Han14}, i.e.,
\be\label{cs}
\D u=-\l e^{g(u)}\le( e^{g(u)}-1\ri)^2+4\pi\sum\limits_{j=1}^{N}\delta_{p_j}
\ee
on a graph $G=(V,E)$ where $\l$ is a positive constant; $g(u)$ is the inverse function of $u=f(\up)=1+\up-e^{\up}$ on $(-\infty, 0]$; $N$ is a positive integer;  $p_1, p_2, \cdot\cdot\cdot, p_N$ are distinct vertices of $V$
and $\d_{p_j}$ is  the Dirac delta mass at $p_j$.

We prove the following existence result for Eq.(\ref{cs} ) on $G$.
\begin{theorem}
There exists a critical value $\l_c$ depending on the graph satisfying
$$\l_c\geq \f{27\pi N}{\ve V\ve},$$
such that if $\l\geq \l_c$, Eq.(\ref{cs}) has a negative solution , and if $\l<\l_c$,  Eq.(\ref{cs}) has no solution.
\end{theorem}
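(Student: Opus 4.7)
Set $\Lambda := \{\lambda > 0 : (\ref{cs}) \text{ has a negative solution}\}$ and $\lambda_c := \inf \Lambda$. Write $h(u) := e^{g(u)}(e^{g(u)}-1)^2$, which is non-negative on $(-\infty, 0]$, vanishes at $u = 0$, and attains its maximum $4/27$ at $u = 2/3 - \log 3$ (a one-variable exercise: set $t = e^{g(u)} \in (0,1]$ and maximize $t(1-t)^2$). The strategy is to prove, in order, an integral lower bound on $\lambda_c$, existence for $\lambda$ large via monotone iteration, upward-closedness of $\Lambda$, and finally existence at $\lambda_c$ itself.

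\emph{Lower bound.} For any solution $u$, integrating (\ref{cs}) over $V$ against $d\mu$ annihilates the Laplacian and yields $\int_V h(u)\, d\mu = 4\pi N/\lambda$. The pointwise bound $h \leq 4/27$ then gives $\lambda \geq 27 \pi N/|V|$, and in particular this holds at $\lambda_c$.

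\emph{Existence for $\lambda$ large and upward-closedness.} The constant function $\bar u \equiv 0$ is a super-solution of (\ref{cs}) for every $\lambda > 0$, since $h(0) = 0$ makes the right-hand side equal to $4\pi \sum \delta_{p_j} \geq 0 = \Delta \bar u$. For $\lambda \geq 27\pi/\mu_{\min}$ (with $\mu_{\min} := \min_j \mu(p_j)$), the constant $\underline u \equiv 2/3 - \log 3$ is a sub-solution: the inequality is automatic away from the $p_j$ and reduces at each $p_j$ to $\lambda \cdot 4/27 \geq 4\pi/\mu(p_j)$. Since $\underline u \leq \bar u$, I choose $K > 0$ so that $u \mapsto K u - \lambda h(u)$ is non-decreasing on $[\underline u, 0]$ (possible because $h'$ is bounded on that interval), and iterate $(-\Delta + K) u_{k+1} = K u_k - \lambda h(u_k) + 4\pi \sum \delta_{p_j}$ starting from $\bar u$; the discrete maximum principle on the finite graph produces a monotone-decreasing sequence bounded below by $\underline u$, whose pointwise limit is a negative solution in $[\underline u, 0]$. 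Hence $\lambda_c < \infty$. For upward-closedness: if $u_{\lambda_0}$ solves (\ref{cs}) and $\lambda > \lambda_0$, then $\Delta u_{\lambda_0} = -\lambda_0 h(u_{\lambda_0}) + 4\pi \sum \delta_{p_j} \geq -\lambda h(u_{\lambda_0}) + 4\pi \sum \delta_{p_j}$, so $u_{\lambda_0}$ is a sub-solution at level $\lambda$; together with $\bar u = 0$, the same iteration yields a negative solution for $\lambda$.

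\emph{Existence at $\lambda = \lambda_c$.} Pick $\lambda_n \searrow \lambda_c$ with corresponding negative solutions $u_n$. Each $u_n \leq 0$, so on the finite graph only a uniform lower bound on $u_n$ is needed to extract a pointwise convergent subsequence whose limit, by continuity of $\Delta$ and $h$, is a negative solution at $\lambda_c$. This uniform lower bound is the main obstacle. The minimum of $u_n$ cannot occur at a non-singular vertex $x_n$: otherwise $\Delta u_n(x_n) \geq 0$ together with $h \geq 0$ forces via (\ref{cs}) that $h(u_n(x_n)) = 0$, hence $u_n(x_n) = 0$ and so $u_n \equiv 0$, incompatible with $N \geq 1$. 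Thus the minimum lies at some $p_j$; if $u_n(p_j) \to -\infty$, then reading off (\ref{cs}) at $p_j$ gives $\Delta u_n(p_j) \to 4\pi/\mu(p_j)$, but this discrete Laplacian is a sum of non-negative differences $u_n(y) - u_n(p_j)$ for $y \sim p_j$, whose boundedness forces each neighbour $u_n(y)$ to also tend to $-\infty$. Iterating along edges and using the connectivity of $G$, $u_n(x) \to -\infty$ for every $x \in V$, whence $h(u_n) \to 0$ pointwise, contradicting $\int_V h(u_n)\, d\mu = 4\pi N/\lambda_n \to 4\pi N/\lambda_c > 0$. This completes the proof.
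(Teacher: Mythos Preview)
Your overall architecture (integral lower bound, sub/super-solution monotone iteration, upward-closedness of $\Lambda$, then a compactness argument at $\lambda_c$) matches the paper. Two technical slips and one genuine methodological difference are worth flagging.

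\textbf{A sign typo in the iteration.} As written, the scheme $(-\Delta+K)u_{k+1}=Ku_k-\lambda h(u_k)+4\pi\sum\delta_{p_j}$ has fixed points solving $\Delta u=\lambda h(u)-4\pi\sum\delta_{p_j}$, not (\ref{cs}); starting from $\bar u=0$ it would in fact produce $u_1\geq 0$ and leave the domain of $g$. You mean $(\Delta-K)u_{k+1}=-\lambda h(u_k)-Ku_k+4\pi\sum\delta_{p_j}$ (equivalently, the paper's (3.3)), for which the monotonicity claim and the choice $K>2\lambda$ are correct since $|h'(u)|=|e^{g(u)}(3e^{g(u)}-1)|\leq 2$ on $(-\infty,0]$.

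\textbf{The propagation step needs one extra sentence.} At the first step you use that $p_j$ is a global minimum to get nonnegative differences. At a neighbour $y$, the terms $u_n(z)-u_n(y)$ are no longer all nonnegative, so boundedness of $\Delta u_n(y)$ alone does not bound each term. The fix is to carry along the global-minimum information: $u_n(z)-u_n(y)=(u_n(z)-u_n(p_j))-(u_n(y)-u_n(p_j))\geq -C_1$ from the previous step, and then the bounded sum forces each term to be bounded above. Inducting on graph distance yields $\sup_V u_n-\inf_V u_n\leq C(G,\lambda_c+1)$, which is what you need.

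\textbf{Comparison with the paper at $\lambda=\lambda_c$.} This is where your argument genuinely diverges from the paper. The paper transforms to (\ref{cs2}), works with the \emph{maximal} solutions $\upsilon_\lambda$, proves their monotonicity in $\lambda$ (Lemma~3.4), and then derives a uniform $W^{1,2}$ bound (Lemma~3.5) by multiplying the equation by the mean-zero part $\upsilon_\lambda'$, using the Poincar\'e inequality (Lemma~2.2) for $\|\nabla\upsilon_\lambda'\|_2$, and the Trudinger--Moser inequality (Lemma~2.3) to control $\bar\upsilon_\lambda$ from below. Your route is more elementary and purely finite-dimensional: you obtain a uniform $L^\infty$ bound directly from the equation via the discrete strong maximum principle and the integral identity $\int_V h(u_n)\,d\mu=4\pi N/\lambda_n$, bypassing both analytic inequalities and the need for maximal solutions or monotonicity in $\lambda$. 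The paper's estimates are closer to the continuum treatment of Caffarelli--Yang/Tarantello and would transfer to infinite graphs or manifold limits; your argument exploits the finiteness of $V$ more directly and is shorter in this setting.
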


Secondly, we consider the Chern-Simons-Higgs equation
\be\label{CSH} \D u=\l e^u(e^u-1)+4\pi\sum\limits_{j=1}^{N}\delta_{p_j},\ee
 which was proposed in \cite{CY95}. Huang, Lin and Yau \cite{HLY} prove that on a finite graph $G$, there is a critical $\l_c$ satisfying
 $$\l_c\geq \f{16\pi N}{\ve V\ve}, $$ such that  Eq.(\ref{CSH}) has a solution if $\l>\l_c$, and  Eq.(\ref{CSH}) has no solution if $\l<\l_c$.

 We study  Eq.(\ref{CSH}) when $\l=\l_c$ and get the following result.
 \begin{theorem}
  Eq.(\ref{CSH}) has a solution when $\l=\l_c$ on a finite graph $G$.
 \end{theorem}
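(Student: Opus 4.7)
The plan is to obtain a solution at $\l_c$ as a limit of solutions provided by Huang-Lin-Yau at slightly supercritical parameters. I would fix a strictly decreasing sequence $\l_n \downarrow \l_c$ with $\l_n > \l_c$ and, for each $n$, let $u_n$ be the solution of (\ref{CSH}) at $\l = \l_n$ produced by the upper-lower solution construction. Since the constant function $0$ is a super-solution of (\ref{CSH}) (the right-hand side at $u = 0$ equals $4\pi\sum_j \d_{p_j}\ge 0$ while $\D 0 \equiv 0$), one may arrange $u_n \leq 0$ on $V$. Because the graph is finite, any uniform pointwise bound on $\{u_n\}$ yields a pointwise convergent subsequence, and passage to the limit in the equation is automatic in the resulting finite-dimensional setting.

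The key step is a uniform two-sided bound. The upper bound $u_n \leq 0$ is in hand, so the real issue is a uniform lower bound. Integrating (\ref{CSH}) against $d\mu$ and using $\iv \D u_n\,d\mu = 0$ gives
$$\iv \le(e^{u_n} - e^{2u_n}\ri) d\mu = \f{4\pi N}{\l_n},$$
and since $e^{2u_n} \leq e^{u_n}$ on $\{u_n \leq 0\}$ this forces
$$\iv e^{u_n}\,d\mu \geq \f{4\pi N}{\l_n} \geq \f{4\pi N}{\l_1}.$$
This nondegeneracy will rule out collapse of $u_n$ to $-\iy$.

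To implement the lower bound, I would pass to a subsequence so that some fixed vertex $x_{*}\in V$ realizes $\min_V u_n$ for every $n$, and suppose for contradiction that $u_n(x_{*}) \to -\iy$. The right-hand side $\l_n e^{u_n}(e^{u_n}-1) + 4\pi \sum_j \d_{p_j}$ is uniformly bounded on $V$ (the smooth piece lies in $[-\l_1/4,\,0]$), hence $\D u_n$ is uniformly bounded. At $x_{*}$ every summand in $\mu(x_{*})\D u_n(x_{*}) = \sum_{y \sim x_{*}} \o_{x_{*}y}(u_n(y)-u_n(x_{*}))$ is nonnegative, so each is individually bounded, forcing $u_n(y)-u_n(x_{*}) \leq C_y$ and hence $u_n(y) \to -\iy$ at every neighbor $y$ of $x_{*}$. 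At such a $y$ the control $u_n(z)-u_n(y) \geq u_n(x_{*})-u_n(y) \geq -C_y$ bounds the negative part of each term in $\mu(y)\D u_n(y) = \sum_{z\sim y}\o_{yz}(u_n(z)-u_n(y))$; combined with the bounded Laplacian this forces each positive part to be bounded too, spreading the divergence to the neighbors of $y$. Iterating and invoking connectedness of $G$ produces $u_n \to -\iy$ on all of $V$, whence $\iv e^{u_n}\,d\mu \to 0$, contradicting the integral lower bound. Therefore $u_n \geq -M$ uniformly.

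With both bounds established, a subsequence of $u_n$ converges pointwise to some $u_{*} \leq 0$, and passing to the limit in (\ref{CSH}) with $\l = \l_n$ gives $\D u_{*} = \l_c e^{u_{*}}(e^{u_{*}}-1) + 4\pi \sum_{j=1}^{N} \d_{p_j}$. The main obstacle is the propagation argument inside the uniform lower bound: the bounded-Laplacian bookkeeping must be carried beyond the initial minimizing vertex, using the already-obtained one-sided comparison with $u_n(x_{*})$, to prevent divergence of $u_n$ from remaining confined to a proper subset of $V$.
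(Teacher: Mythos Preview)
Your argument is correct, and it is genuinely different from the paper's. The paper transforms the equation via $u=\upsilon_0+\upsilon$, establishes a priori $W^{1,2}$-bounds on the maximal solutions $\upsilon_\l$ by decomposing into mean and mean-free parts, invoking the Poincar\'e inequality (Lemma~2.2) and the Trudinger--Moser-type estimate (Lemma~2.3), and then passes to the limit using monotonicity in $\l$ (their Lemma~4.1). Your route bypasses all of this analytic machinery: you work directly with $u_n$, get $u_n\le 0$ from the obvious super-solution, and obtain the uniform lower bound by a discrete Harnack-type propagation: the right-hand side of (\ref{CSH}) is bounded because $t\mapsto e^t(e^t-1)$ maps $(-\infty,0]$ into $[-1/4,0]$, hence $\D u_n$ is uniformly bounded, and starting from a fixed minimizing vertex the nonnegativity of the summands in $\D u_n$, together with the global lower bound $u_n\ge u_n(x_*)$, lets you bound each edge increment and push $u_n\to-\infty$ across the connected graph, contradicting $\iv e^{u_n}\,d\mu\ge 4\pi N/\l_1$. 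This is more elementary and exploits finiteness of $V$ directly; the paper's approach, by contrast, parallels the continuous PDE treatment of Caffarelli--Yang and Tarantello and would transfer more readily to infinite or analytic settings. One small remark: you do not need monotonicity in $\l$ at all, since on a finite vertex set uniform two-sided bounds already give subsequential convergence; the paper's monotonicity lemma is convenient but, as your argument shows, not essential for existence at $\l_c$.
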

 In the proof of the main theorems, we take the similar treatments as in the continuous case \cite{CY95, Han14, GT96}. The rest of the paper is arranged as follows. In Section Two, we give some preliminaries which will be used later. In Section Three, we prove Theorem 1.1. In Section Four, we prove Theorem 1.2.
\begin{remark}
L\"{u} and Zhong \cite{LZ} obtained  the existence of the single solution to Eq.(\ref{cs}) by a different method. They also studied the existence of multiple
solutions.
\end{remark}

\section {Preliminaries }
In this section, we introduce three lemmas which will be used in Section Three and Section Four.
\begin{lemma} (Huang-Lin-Yau \cite{HLY})

Let $G=(V,E)$ be a finite graph.
If for some constant $K>0$, $\D  u(x) -Ku(x) \geq 0$ for all $x\in V$, then $u\leq 0$ on $V$ .
\end{lemma}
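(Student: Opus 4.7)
The plan is to exploit the fact that a finite graph has only finitely many vertices, so the function $u$ attains its maximum at some vertex $x_0 \in V$. At such a maximum point, every neighbor $y \sim x_0$ satisfies $u(y) \leq u(x_0)$, and therefore the discrete Laplacian, being a weighted average of the differences $u(y) - u(x_0)$ with positive weights $\omega_{x_0 y}/\mu(x_0)$, must be non-positive: $\Delta u(x_0) \leq 0$.

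I would then combine this with the hypothesis evaluated at $x_0$: from $\Delta u(x_0) - K u(x_0) \geq 0$ I get $K u(x_0) \leq \Delta u(x_0) \leq 0$. Since $K > 0$, this forces $u(x_0) \leq 0$. Because $x_0$ was chosen as a global maximizer on $V$, we conclude $u(x) \leq u(x_0) \leq 0$ for all $x \in V$, which is exactly the claim.

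There is essentially no obstacle here; the argument is the standard discrete maximum principle. The only small point to be careful about is making the existence of a maximizer explicit (using finiteness of $V$) and verifying the sign in the computation of $\Delta u(x_0)$, which depends on the convention $\Delta u(x) = \frac{1}{\mu(x)}\sum_{y\sim x}\omega_{xy}(u(y)-u(x))$ used in the preliminaries. Under this convention a local maximum genuinely forces $\Delta u \leq 0$, so the inequality $\Delta u - K u \geq 0$ together with $K > 0$ immediately gives the desired sign of $u$ at the maximizer.
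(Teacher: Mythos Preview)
Your argument is correct and is precisely the standard discrete maximum principle: pick a maximizer $x_0$ (finiteness of $V$), use the sign convention on $\Delta$ to get $\Delta u(x_0)\leq 0$, and combine with the hypothesis and $K>0$ to force $u(x_0)\leq 0$. The paper itself does not prove this lemma; it is quoted from Huang--Lin--Yau \cite{HLY}, whose proof is exactly the maximum-point argument you wrote.
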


\begin{lemma}(Grigor'yan-Lin-Yang \cite{GLY})

Let $G=(V,E)$ be a finite graph. Then there exists some constant $C$ depending only on $G$ such that
$$\iv u^2d\mu\leq C\iv \ve \na u\ve^2d\mu,$$ for all $u\in V^{\mathbb{R}}$ with $\iv ud\mu=0$, where $V^{\mathbb{R}}=\{u\,|\,u \,\text{is a real function}: V\ra\mathbb{R}\}$.
 \end{lemma}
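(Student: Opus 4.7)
The plan is to recognize the inequality as strict positivity of the first nonzero Laplacian eigenvalue on the finite graph $G$ and to prove it by a standard compactness argument in finite dimensions. Since $V$ is finite, $V^{\mathbb{R}}$ is a finite-dimensional real vector space, and the mean-zero subspace $\mathcal{H}=\{u\in V^{\mathbb{R}} : \int_V u\,d\mu=0\}$ is a linear subspace of it. The desired inequality is homogeneous of degree two in $u$, so the problem reduces to showing
$$\lambda_1 := \inf\left\{\int_V |\nabla u|^2\,d\mu \; : \; u\in\mathcal{H},\ \int_V u^2\,d\mu = 1\right\} > 0,$$
after which one may simply take $C=1/\lambda_1$.

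To produce a minimizer, I would invoke finite-dimensional compactness: the constraint set $S=\{u\in\mathcal{H} : \int_V u^2\,d\mu = 1\}$ is closed and bounded in $\mathcal{H}$, hence compact, and the functional $u\mapsto\int_V|\nabla u|^2\,d\mu$ is continuous (indeed, polynomial in the finitely many real numbers $u(x)$ for $x\in V$). Hence the infimum is attained at some $u_0\in S$.

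The final step, which is really the only substantive point, is to rule out $\int_V|\nabla u_0|^2\,d\mu=0$. If this quantity vanished, then because every edge weight $\omega_{xy}$ is strictly positive, the definition of the gradient form would force $u_0(y)=u_0(x)$ whenever $xy\in E$; by the standing assumption that $G$ is connected, $u_0$ would then be constant on $V$. Combined with $\int_V u_0\,d\mu=0$, this would force $u_0\equiv 0$, contradicting $\int_V u_0^2\,d\mu = 1$. The main obstacle is therefore only the careful use of connectedness to exclude constants from the constraint set; everything else is routine, and the conclusion follows with $C = 1/\lambda_1$.
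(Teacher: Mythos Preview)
Your argument is correct: the finite-dimensional compactness plus connectedness approach is the standard way to establish this Poincar\'e inequality, and every step you outline goes through cleanly. Note, however, that the paper does not actually prove this lemma---it is quoted without proof from Grigor'yan--Lin--Yang \cite{GLY} as a preliminary---so there is no in-paper argument to compare against; your proof would serve perfectly well as the missing justification.
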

\begin{lemma}\label{lm3}  (Grigor'yan-Lin-Yang \cite{GLY})

Let $G=(V,E)$ be a finite graph. For any $\al>0$ and all functions $u\in V^{\mathbb{R}}$ with $\iv \ve \na u\ve^2 d\mu\leq 1$ and $\iv ud\mu=0$, there exists some constant $C>0$ depending only on $\al$
and $G$ such that
$$\iv e^{\al u^2}d\mu\leq C.$$
\end{lemma}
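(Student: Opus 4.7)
The plan is to exploit the fact that on a finite graph all the relevant function spaces are finite dimensional, so the desired bound is really just a consequence of an $L^\infty$ estimate combined with the trivial observation that a bounded real-valued function on a finite set has bounded exponential. Concretely, I would reduce the exponential integral to a uniform pointwise bound on $u$ and then sum over $V$.

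First, I would invoke Lemma 2.2 (Poincar\'e). Under the hypotheses $\int_V u\,d\mu=0$ and $\int_V |\nabla u|^2 d\mu\leq 1$, Lemma 2.2 yields a constant $C_1=C_1(G)$ with
\[
\int_V u^2\,d\mu \leq C_1 \int_V |\nabla u|^2\,d\mu \leq C_1.
\]
Since $V$ is finite and $\mu$ takes only positive values, set $\mu_0=\min_{x\in V}\mu(x)>0$. For any vertex $x$, the single-term inequality
\[
\mu_0\, u(x)^2 \leq \mu(x)\, u(x)^2 \leq \int_V u^2\,d\mu \leq C_1
\]
gives the uniform $L^\infty$ bound $|u(x)|\leq \sqrt{C_1/\mu_0}$ on every $x\in V$, with the right-hand side depending only on $G$.

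With this pointwise bound in hand, the exponential integral is controlled trivially: for every $x\in V$,
\[
e^{\alpha u(x)^2} \leq \exp\!\left(\frac{\alpha C_1}{\mu_0}\right),
\]
and summing against $\mu$ gives
\[
\int_V e^{\alpha u^2}\,d\mu = \sum_{x\in V}\mu(x)\,e^{\alpha u(x)^2} \leq \left(\sum_{x\in V}\mu(x)\right)\exp\!\left(\frac{\alpha C_1}{\mu_0}\right),
\]
which is a constant $C=C(\alpha,G)$ as required.

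The whole argument is really a remark that the Moser--Trudinger phenomenon collapses to an elementary inequality in the discrete finite setting, because the Poincar\'e bound on $\|u\|_{L^2(V)}$ already forces a uniform sup-norm bound via the minimum vertex measure. There is no genuine obstacle; the only point that needs to be noted is that $\mu_0>0$, which is automatic since $V$ is finite and each $\mu(x)>0$. One could alternatively phrase the proof compactness-theoretically: the constraint set $\{u\in V^{\mathbb{R}}: \int_V u\,d\mu=0,\ \int_V|\nabla u|^2 d\mu\leq 1\}$ is a closed bounded subset of the finite-dimensional space $V^{\mathbb{R}}$, hence compact, and $u\mapsto \int_V e^{\alpha u^2}d\mu$ is continuous, so it attains a maximum $C(\alpha,G)$; but the direct pointwise estimate above is cleaner and also yields an explicit constant.
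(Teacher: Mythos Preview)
Your proof is correct. The paper itself does not supply a proof of this lemma; it is stated in Section~2 as a preliminary result quoted from Grigor'yan--Lin--Yang \cite{GLY}, so there is no in-paper argument to compare against. For what it is worth, your route---Poincar\'e (Lemma~2.2) to bound $\|u\|_{L^2}$, then the minimum vertex measure $\mu_0>0$ to upgrade to an $L^\infty$ bound, then a trivial estimate on the exponential---is exactly the standard proof in the cited reference, where the observation is likewise that on a finite graph the Moser--Trudinger inequality degenerates to an elementary consequence of finite-dimensionality.
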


\section{Proof of  Theorem 1.1}
Noting
$$\iv \le(-\f{4\pi N}{\ve V\ve}+4\pi \sum\limits_{j=1}^{N}\delta_{p_j}\ri)d\mu=0,$$ we assert that there exists a solution $\upsilon_0$ of the equation
\be \label{up1}\D \up_0= -\f{4\pi N}{\ve V\ve}+4\pi \sum\limits_{j=1}^{N}\delta_{p_j}. \ee

Letting $u=\up_0+\up$, then we rewrite Eq.(\ref{cs}) as
\be\label{cs2}
\D \up =-\l e^{g(\up_0+\up)}\le(e^{g(\up_0+\up)}-1\ri)^2+\f{4\pi N}{\ve V\ve}.
\ee
We will use an upper-lower solution method to derive the existence results of Eq.(\ref{cs2}). A function $\up_-$ is called a lower solution of (\ref{cs2}) if  it satisfies
$$\D \up_- \geq -\l e^{g(\up_0+\up_-)}\le(e^{g(\up_0+\up_-)}-1\ri)^2+\f{4\pi N}{\ve V\ve},$$
for all $x\in V$.
Similarly, a function $\up_+$ is called an upper solution of (\ref{cs2}) if  it satisfies
$$\D \up_+ \leq -\l e^{g(\up_0+\up_+)}\le(e^{g(\up_0+\up_+)}-1\ri)^2+\f{4\pi N}{\ve V\ve},$$
for all $x\in V$.
 Let $\psi_0=-\up_0$. Obviously, $\psi_0$
is an upper solution of (\ref{cs2}). We introduce the following iterative sequence $\{\psi_n\}$,

\be\label{cs3}\le(\D-K\ri)\psi_n=-\l e^{g(\up_0+\psi_{n-1})}\le(e^{g(\up_0+\psi_{n-1})}-1\ri)^2-K\psi_{n-1}+\f{4\pi N}{\ve V\ve},\ee
where $K$ is a positive constant and $n=1,2,\cdot\cdot\cdot.$
\begin{lemma}
Suppose $K>2\l$. Let $\{\psi_n\}$ be the sequence defined by (\ref{cs3}). For any lower solution $\up_-$ of (\ref{cs2}), there holds
$$\psi_0> \psi_1> \psi_2>\cdot\cdot\cdot> \psi_n>\cdot\cdot\cdot
>\up_{-}.$$ Therefore,  if (\ref{cs2}) has a lower solution, then $\{\psi_n\}$ converges to a maximal solution of (\ref{cs2}).
 \end{lemma}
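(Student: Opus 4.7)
The plan is to reduce each step of the monotone iteration to the comparison principle (Lemma 2.1) by writing the difference between successive $\psi$'s, or between $\psi_n$ and $\up_-$, in the form $(\D - K)w \geq 0$. The key computation is to show that the hypothesis $K > 2\l$ exactly beats the derivative of the nonlinearity $h(t) := -\l e^{g(t)}(e^{g(t)}-1)^2$. Using $g'(t) = 1/f'(g(t)) = 1/(1-e^{g(t)})$ one gets $h'(t) = \l e^{g(t)}(3e^{g(t)}-1)$, so after the substitution $w = e^{g(t)} \in (0,1]$ this reads $h'(t) = \l w(3w-1)$, whose maximum on $(0,1]$ is attained at $w = 1$ and equals $2\l$. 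Thus $h'(\xi) - K \leq 2\l - K < 0$ at any intermediate point produced by the mean value theorem.

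The base case $\psi_1 < \psi_0$ is direct: plugging $\psi_0 = -\up_0$ (so $g(\up_0+\psi_0) = g(0) = 0$) into (\ref{cs3}) collapses the nonlinear term and yields $(\D-K)(\psi_1-\psi_0) = 4\pi\sum_j \d_{p_j} \geq 0$. Lemma 2.1 gives $\psi_1 \leq \psi_0$, and equality at any vertex $x_0$ would force $\D(\psi_1-\psi_0)(x_0) \leq 0$ (since $x_0$ is a maximum of $\psi_1 - \psi_0$); combined with the equation this propagates equality along edges and, by connectedness of $G$, to all of $V$, contradicting strict positivity of the right-hand side at the $p_j$'s. The inductive step $\psi_n < \psi_{n-1} \Rightarrow \psi_{n+1} < \psi_n$ follows the same template: subtracting consecutive instances of (\ref{cs3}) and applying the mean value theorem to $h$ gives $(\D-K)(\psi_{n+1}-\psi_n) = [h'(\xi) - K](\psi_n - \psi_{n-1})$, which is strictly positive pointwise since both factors are strictly negative. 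An entirely parallel calculation, using the inequality in the lower-solution definition, gives $\up_- < \psi_n \Rightarrow \up_- < \psi_{n+1}$; the base case $\up_- < \psi_0$ is extracted from the domain constraint $\up_0 + \up_- \leq 0$ together with another application of the $\d_{p_j}$ propagation argument above applied to $w := \up_- + \up_0$.

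Convergence is then automatic: $\{\psi_n(x)\}$ is pointwise decreasing and bounded below by $\up_-(x)$, so it converges at each vertex (which suffices on a finite graph) to some $\psi$. Continuity of $g$ lets me pass to the limit in (\ref{cs3}) and conclude that $\psi$ solves (\ref{cs2}). For maximality, any solution $\tilde\up$ of (\ref{cs2}) is in particular a lower solution, so the same induction gives $\tilde\up \leq \psi_n$ for every $n$, hence $\tilde\up \leq \psi$. I expect the main obstacle to be the derivative bound $h'(t) \leq 2\l$, which is the one place the specific structure of $g$ and the nonlinearity is used; once that is in hand, all the comparison work is a routine combination of the mean value theorem, Lemma 2.1, and the finite-graph strong maximum principle via connectedness.
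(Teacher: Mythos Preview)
Your proposal is correct and follows essentially the same route as the paper: the derivative bound $h'(t)=\l e^{g(t)}(3e^{g(t)}-1)\le 2\l$ combined with the mean value theorem, Lemma~2.1, and the connectedness/strong-maximum-principle argument is exactly how the paper handles each inequality in the chain. The only cosmetic difference is that for the base case $\up_-<\psi_0$ you first invoke the domain constraint $\up_0+\up_-\le 0$ (needed for $g$ to be defined) to get $\up_-\le\psi_0$ and then upgrade to strict via the $\delta_{p_j}$ argument, whereas the paper writes the differential inequality (\ref{ps2}) directly; the content is the same.
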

 \begin{proof}
 We proceed by induction. For $n=1$, we have
 $$\le(\D-K\ri)\psi_1=-K\psi_0+\f{4\pi N}{\ve V\ve}.$$  It follows that
 \be\label{ps1}\le(\D-K\ri)(\psi_1-\psi_0)=4\pi \sum\limits_{j=1}^{N}\delta_{p_j}\geq 0.\ee Then Lemma 2.1 implies that
 $\psi_1\leq \psi_0$ on $V$. We claim that $\psi_1(x)<\psi_0(x)$ for all $x\in V$. To this end, we only need to prove $M=\psi_1(x_0)-\psi_0(x_0)=\max_{x\in V}\{\psi_1(x)-\psi_0(x)\}<0$. Suppose not, we have
 $$\D(\psi_1-\psi_0)(x_0)\geq K(\psi_1-\psi_0)(x_0)\geq 0.$$
 Hence $(\psi_1-\psi_0)(y)=(\psi_1-\psi_0)(x_0)$ if $y\sim x_0$. By the connectedness of $G$, we conclude
 $(\psi_1-\psi_0)(y)=(\psi_1-\psi_0)(x_0)$ for any $y\in V$. This contradicts (\ref{ps1}) at $p_j\in V$ and confirms our claim.

 Suppose that $$\psi_0>\psi_1>\cdot\cdot\cdot> \psi_k.$$
 Noting $K>2\l$, we have
\bna\begin{aligned}
(\D-K)(\psi_{k+1}-\psi_k)&=-\l\le[e^{g(\up_0+\psi_k)}\le(e^{g(\up_0+\psi_k)}-1\ri)^2-e^{g(\up_0+\psi_{k-1})}\le(e^{g(\up_0+\psi_{k-1})}-1\ri)^2\ri]-K(\psi_k-\psi_{k-1})\\
&=-\l g'(\xi)e^{g(\xi)}\le(3e^{g(\xi)}-1\ri)\le( e^{g(\xi)}-1\ri)(\psi_k-\psi_{k-1})-K(\psi_k-\psi_{k-1})\\
&=\le[\l e^{g(\xi)}\le( 3e^{g(\xi)}-1\ri)-K\ri](\psi_k-\psi_{k-1})\\
&\geq (2\l-K)(\psi_k-\psi_{k-1})\\
&>0,
\end{aligned}
\ena
 where we have used the mean value theorem, $\up_0+\psi_k\leq \xi\leq \up_0+\psi_{k-1}$, $g'(\xi)=1/(1-e^{g(\xi)})$.
 Then Lemma 2.1 yields $\psi_{k+1}\leq\psi_k$. Using the same argument as in proving $\psi_1<\psi_0$, we get $\psi_{k+1}<\psi_k$.

 Next we prove that $\psi_k>\up_-$. For $k=0$, we have
 \be\label{ps2}\begin{aligned}
\D(\up_--\psi_0)&\geq -\l e^{g(\up_--\psi_{0})}\le(e^{g(\up_--\psi_{0})}-1\ri)^2+4\pi \sum\limits_{j=1}^{N}\delta_{p_j}\\
&=-2\l e^{g(\up_--\psi_{0})}g'(\xi)e^{g(\xi)}(e^{g(\xi)}-1)(\up_--\psi_0)+4\pi \sum\limits_{j=1}^{N}\delta_{p_j}\\
&=2\l e^{g(\up_--\psi_{0})}e^{g(\xi)}(\up_--\psi_0)+4\pi \sum\limits_{j=1}^{N}\delta_{p_j},
\end{aligned}
\ee
where $\xi$ is between $\up_--\psi_0$ and $0$.   Let $\up_-(x_0)-\psi_0(x_0)=\max_{x\in V}\{\up_-(x)-\psi_0(x)\}$. If $(\up_-(x_0)-\psi_0(x_0))\geq 0$, then
$$\D(\up_--\psi_0)(x_0)\geq 0.$$  Hence $(\up_--\psi_0)(y)=(\up_--\psi_0)(x_0)$ if $y\sim x_0$. Noting $G$ is connected, we obtain $(\up_--\psi_0)(x)\equiv (\up_--\psi_0)(x_0)$, which contradicts (\ref{ps2}) at $p_j$. As a conclusion, $\up_-<\psi_0$.

 Suppose that $\up_-<\psi_k$ for any $k\geq 0$. Then we have
\bna\begin{aligned}
(\D-K)(\up_--\psi_{k+1})&\geq-\l\le[e^{g(\up_0+\up_-)}\le(e^{g(\up_0+\up_-)}-1\ri)^2-e^{g(\up_0+\psi_{k})}\le(e^{g(\up_0+\psi_{k})}-1\ri)^2\ri]-K(\up_--\psi_{k})\\
&=\le[\l e^{g(\xi)}\le( 3e^{g(\xi)}-1\ri)-K\ri](\up_--\psi_{k})\\
&\geq (2\l-K)(\up_--\psi_{k})\\
&>0,
\end{aligned}
\ena
where $\up_-+\up_0\leq \xi\leq \psi_k+\up_0$. By the same argument as before, we derive $\up_-<\psi_{k+1}$.  The proof is completed.
\end{proof}

\begin{lemma}
When $\l$ is sufficient big,  Eq.(\ref{cs2}) has a solution.
\end{lemma}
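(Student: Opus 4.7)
The plan is to produce a lower solution of (\ref{cs2}) when $\l$ is sufficiently large, and then invoke Lemma 3.1 to extract a solution as the monotone pointwise limit of $\{\psi_n\}$.

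The ansatz I would try is $\up_- = -\up_0 + c$ for a constant $c<0$. This choice has two virtues. First, $\up_- + \up_0 \equiv c < 0$, so automatically $\up_- < \psi_0 = -\up_0$, which is the hypothesis on the lower solution required by Lemma 3.1. Second, and more importantly, the nonlinear term $e^{g(\up_0+\up_-)}(e^{g(\up_0+\up_-)}-1)^2$ collapses to the constant $e^{g(c)}(e^{g(c)}-1)^2$, reducing the problem to a one-variable optimization. Writing $t = e^{g(c)} \in (0,1]$, the cubic $t(t-1)^2$ attains its maximum $4/27$ at $t=1/3$, which corresponds to $c = c^{*} := \f{2}{3} - \ln 3$, a negative number.

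With this choice, $\D \up_- = -\D \up_0 = \f{4\pi N}{\ve V \ve} - 4\pi \sum_{j=1}^{N} \d_{p_j}$, so the lower-solution inequality reduces to $\l\cdot\f{4}{27} \geq 4\pi\sum_{j=1}^{N}\d_{p_j}(x)$ for all $x\in V$. Away from the vortex points this is trivial, and at each $p_j$ it holds provided $\l \geq 27\pi/\min_j \mu(p_j)$. For any such $\l$, Lemma 3.1 produces a monotone decreasing sequence $\{\psi_n\}$ bounded below by $\up_-$; since $V$ is finite, pointwise convergence to some $\psi_\infty$ is automatic, and passing to the limit in (\ref{cs3}) using continuity of $g$ shows that $\psi_\infty$ solves (\ref{cs2}). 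The main obstacle is spotting the ansatz, which cleverly exploits the identity $\D \up_0 = -\f{4\pi N}{\ve V \ve} + 4\pi\sum_j \d_{p_j}$ to render the nonlinear term constant in $x$ and thereby collapse the inequality to a scalar optimization; once that trick is found, the verification is elementary.
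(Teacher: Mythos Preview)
Your argument is correct but proceeds via a different choice of lower solution than the paper. The paper simply takes $\up_- = -c'$ to be a \emph{constant} with $c' > \max_V \up_0$, so that $\D\up_- = 0$ and $\up_0 + \up_- < 0$ everywhere; since $V$ is finite and the nonlinear factor $e^{g(\up_0-c')}(e^{g(\up_0-c')}-1)^2$ is strictly positive at every vertex, one can pick $\l$ large enough to make the right-hand side of the lower-solution inequality negative, and the inequality $0 \ge (\text{negative})$ is immediate. Your ansatz $\up_- = -\up_0 + c^*$ instead forces $\up_0 + \up_-$ to be constant, which collapses the nonlinearity to the scalar $4/27$ and yields an explicit threshold $\l \ge 27\pi/\min_j \mu(p_j)$; this is a pleasant bonus that echoes the $27\pi N/\ve V\ve$ bound appearing later in Lemma~3.3, at the cost of a slightly less elementary verification (one must handle the Dirac masses coming from $\D\up_0$).

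One small misreading: the inequality $\up_- < \psi_0$ is not a hypothesis of Lemma~3.1 but one of its conclusions---the lemma shows that \emph{any} lower solution automatically lies below every $\psi_n$. Your argument is unaffected, since you only need the existence of a lower solution to invoke the lemma.
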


\begin{proof}
By Lemma 3.1, if Eq.(\ref{cs2})  has a lower solution, then $\{\psi_n\}$ converges to a solution of (\ref{cs2}).  Let $
\up_0$ be the solution of (\ref{up1}). Noting $\up_0$ is bounded on $V$, we can choose a positive constant $c'$ such that $\up_0-c'<0$.
Let $\up_-=-c'$ and pick up a sufficient big $\l$ such that $$-\l e^{g(\up_0+\up_-)}\le(e^{g(\up_0+\up_-)}-1\ri)^2+\f{4\pi N}{\ve V\ve}<0.$$
It follows that
$$\D \up_- \geq -\l e^{g(\up_0+\up_-)}\le(e^{g(\up_0+\up_-)}-1\ri)^2+\f{4\pi N}{\ve V\ve}.$$
 Hence $\up_-$ is a lower solution of (\ref{cs2}). The proof is finished.
\end{proof}

 \begin{lemma}
 There exists a critical value $\l_c$ satisfying
 $$\l_c\geq \f{27\pi N}{\ve V\ve},$$
  such that if $\l> \l_c$, Eq.(\ref{cs2}) has a solution and Eq.(\ref{cs2})  has no solution if $\l<\l_c$.
 \end{lemma}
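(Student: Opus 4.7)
The plan is to let $\L = \{\l > 0 : \text{Eq.}(\ref{cs2}) \text{ admits a solution}\}$ and set $\l_c = \inf \L$. Lemma 3.2 guarantees $\L \neq \emptyset$, so $\l_c$ is a well-defined finite nonnegative number, and by construction Eq.(\ref{cs2}) has no solution for $\l < \l_c$. The lemma then reduces to two ingredients: a monotonicity statement that promotes any $\l_0 \in \L$ to the whole ray $[\l_0, \infty)$, and an integral estimate forcing $\l_c \geq \tfrac{27\pi N}{\ve V\ve}$.

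For the first ingredient, I would show: if $\l_0 \in \L$ and $\l > \l_0$, then $\l \in \L$. Pick a solution $\up$ of (\ref{cs2}) at parameter $\l_0$ obtained via Lemma 3.1, for which $\up \leq \psi_0 = -\up_0$ and hence $u = \up_0 + \up \leq 0$, so $g(\up_0 + \up)$ is well-defined. Since $e^{g(\up_0+\up)}(e^{g(\up_0+\up)}-1)^2 \geq 0$ and $\l > \l_0$, the equality
$$\D\up = -\l_0 e^{g(\up_0+\up)}\le(e^{g(\up_0+\up)}-1\ri)^2 + \f{4\pi N}{\ve V\ve}$$
upgrades to the inequality
$$\D\up \geq -\l e^{g(\up_0+\up)}\le(e^{g(\up_0+\up)}-1\ri)^2 + \f{4\pi N}{\ve V\ve},$$
so $\up$ is a lower solution of (\ref{cs2}) at parameter $\l$. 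Lemma 3.1 then produces a solution at $\l$, giving $(\l_c, \infty) \subset \L$.

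For the second ingredient, for any $\l \in \L$ and corresponding negative solution $u$ of (\ref{cs}), integrate (\ref{cs}) over $V$ and use $\iv \D u\, d\mu = 0$ to obtain
$$4\pi N = \l\iv e^{g(u)}\le(e^{g(u)}-1\ri)^2 d\mu.$$
Writing $t = e^{g(u)} \in (0, 1]$, elementary calculus shows $h(t) = t(t-1)^2$ attains its maximum $\tfrac{4}{27}$ on $(0, 1]$ at $t = \tfrac{1}{3}$. Hence $4\pi N \leq \l \cdot \tfrac{4\ve V\ve}{27}$, i.e., $\l \geq \tfrac{27\pi N}{\ve V\ve}$, and taking infimum over $\L$ gives the claimed bound on $\l_c$.

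The main obstacle is the monotonicity step: one must ensure that the solution at $\l_0$ is usable as a lower solution in Lemma 3.1's iteration, which requires $u \leq 0$ so that $g$ stays within its domain $(-\infty, 0]$. This negativity is already built into any solution produced by Lemma 3.1 because the iterates remain strictly below the canonical upper solution $\psi_0 = -\up_0$; once this is observed, the rest of the argument is straightforward bookkeeping.
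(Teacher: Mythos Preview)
Your proposal is correct and follows essentially the same route as the paper: define $\Lambda$ as the set of admissible $\lambda$, use a solution at $\lambda_0$ as a lower solution at any $\lambda>\lambda_0$ (invoking Lemma~3.1), set $\lambda_c=\inf\Lambda$, and derive the lower bound by integrating the equation and maximizing $t(t-1)^2$ (equivalently, the paper writes $h(t)=-e^{t}(e^{t}-1)^{2}$ and finds its minimum $-\tfrac{4}{27}$). Your extra care about $u=\up_0+\up\le 0$ so that $g$ is defined is exactly what the paper uses implicitly when it writes $\up_0+\up'<0$.
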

 \begin{proof}
 Define
 $$\Lambda=\Big\{\l>0\,\big|  \text{ $\l $ is  such  that  (\ref{cs2})  has a solution}\Big\}.$$
 We will prove that $\L$ is an interval. Letting $\l'$ be in $\L$, we only need to show
 $$[\l',+\infty)\subset \L.$$
 Suppose that $\up'$ is the solution of Eq.(\ref{cs2}) at $\l=\l'$. Observing $\up_0+\up'<0$, we obtain
 $$\D \up' \geq -\l e^{g(\up_0+\up')}\le(e^{g(\up_0+\up')}-1\ri)^2+\f{4\pi N}{\ve V\ve},$$
 if $\l>\l'$. Hence $\up'$ is a lower solution of (\ref{cs2}) for $\l>\l'$. By Lemma 3.1,  Eq.(\ref{cs2}) has a solution. It yields that $[\l',+\infty)\subset \L$.
 Denote
 $\l_c=\inf\{\l\,|\,\l\in \L\}$. Let $h(t)=-e^t(e^t-1)^2$, $t\in (-\infty, 0]$. We can see that $h$ has a minimal value $-\f{4}{27}$. Hence
 \be\label{le1}
 \D \up\geq -\f{4}{27}\l+\f{4\pi N }{\ve V \ve },\ee if $\up$ satisfies (\ref{cs2}). Integrating (\ref{le1}) over $V$, we obtain
 $$\l\geq\f {27\pi N}{\ve V\ve}.$$  Passing to the limit $\l\ra\l_c$, we have
 $$\l_c\geq \f{27\pi N}{\ve  V\ve}.$$
 \end{proof}

Next we will prove that if $\l=\l_c$,  Eq.(\ref{cs2}) has a solution. First we show the monotonicity of the solution of (\ref{cs2}) with respect to $\l$.
 \begin{lemma}
 Let $\{\up_{\l}\,|\,\l>\l_c\}$ be the  family of maximal solutions of (\ref{cs2}) obtained by Lemma 3.2. Then there holds $\up_{\l_1}>\up_{\l_2}$
if $\l_1>\l_2>\l_c$ .
\end{lemma}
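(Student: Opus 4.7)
The plan is to exhibit $\up_{\l_2}$ as a strict lower solution of Eq.(\ref{cs2}) at parameter $\l_1$, invoke Lemma 3.1 to deduce $\up_{\l_1}\geq\up_{\l_2}$, and then upgrade this weak inequality to a strict one via the discrete maximum principle.

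The preliminary fact I would establish is that $u_{\l_2}:=\up_0+\up_{\l_2}$ is \emph{strictly} negative on all of $V$. In the proof of Lemma 3.1 it is shown that $\psi_1<\psi_0=-\up_0$ at every vertex; since $V$ is finite this yields a uniform gap $\psi_1\leq -\up_0-\ep$ for some $\ep>0$, and the monotone decreasing limit $\up_{\l_2}$ inherits $\up_0+\up_{\l_2}\leq -\ep<0$. Consequently $e^{g(u_{\l_2})}(e^{g(u_{\l_2})}-1)^2>0$ on $V$, so for $\l_1>\l_2$,
\begin{equation*}
\D\up_{\l_2}=-\l_2 e^{g(u_{\l_2})}\le(e^{g(u_{\l_2})}-1\ri)^2+\f{4\pi N}{\ve V\ve}>-\l_1 e^{g(u_{\l_2})}\le(e^{g(u_{\l_2})}-1\ri)^2+\f{4\pi N}{\ve V\ve},
\end{equation*}
i.e., $\up_{\l_2}$ is a (strict) lower solution of (\ref{cs2}) at $\l=\l_1$. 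Applying Lemma 3.1 with $\up_-=\up_{\l_2}$, the iterates $\{\psi_n\}$ (starting from $\psi_0=-\up_0$) converge to the maximal solution $\up_{\l_1}$ while satisfying $\psi_n>\up_{\l_2}$ for every $n$; passing to the limit yields $\up_{\l_1}\geq\up_{\l_2}$.

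To promote this to strict inequality, I would set $w:=\up_{\l_1}-\up_{\l_2}\geq 0$ and, by the mean value theorem together with $g'(\xi)=1/(1-e^{g(\xi)})$ as in Lemma 3.1, compute
\begin{equation*}
\D w=\l_1 e^{g(\xi)}\le(3e^{g(\xi)}-1\ri)w-(\l_1-\l_2)e^{g(u_{\l_2})}\le(e^{g(u_{\l_2})}-1\ri)^2,
\end{equation*}
where $\xi$ lies between $u_{\l_1}$ and $u_{\l_2}$. Let $x_0\in V$ achieve $\min_V w$. If $w(x_0)=0$ then $\D w(x_0)\geq 0$ (minimum property on the graph), while the right-hand side at $x_0$ reduces to $-(\l_1-\l_2)e^{g(u_{\l_2}(x_0))}(e^{g(u_{\l_2}(x_0))}-1)^2<0$ because $u_{\l_2}(x_0)<0$, a contradiction. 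Hence $w>0$ throughout $V$, i.e., $\up_{\l_1}>\up_{\l_2}$.

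The main obstacle I anticipate is establishing the strict negativity $u_{\l_2}<0$ on $V$: without it, $e^{g(u_{\l_2})}(e^{g(u_{\l_2})}-1)^2$ could vanish at some vertex, wiping out both the strict lower-solution step and the final contradiction. Propagating the single-step strictness $\psi_1<\psi_0$ through the monotone iteration to its limit is therefore the crux; finiteness of $V$ is what allows pointwise strictness to be converted into a uniform positive gap that survives the limit.
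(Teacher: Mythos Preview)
Your proposal is correct and follows essentially the same route as the paper: show that $\up_{\l_2}$ is a lower solution of (\ref{cs2}) at $\l=\l_1$, invoke Lemma~3.1 to get $\up_{\l_1}\geq\up_{\l_2}$, then use the mean-value expansion and a minimum-point argument to rule out equality. The only substantive difference is that you explicitly justify the strict negativity $\up_0+\up_{\l_2}<0$ via the uniform gap $\psi_1\leq\psi_0-\ep$ on the finite vertex set, whereas the paper simply asserts ``we know $\up_0+\up_\l<0$'' and proceeds; your added care here is warranted, since the strictness of the $(\l_1-\l_2)$ term---and hence the final contradiction---depends on it in both arguments.
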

\begin{proof}
Suppose that $\up_{\l}$ is a solution of (\ref{cs2}) and $\l_1>\l_2$. We know $\up_0+\up_{\l}<0$.   It follows from (\ref{cs2}) that

\bna\begin{aligned}
\D \up_{\l_2}&=-\l_2e^{g(\up_0+\up_{\l_2})}\le(e^{g(\up_0+\up_{\l_2})}-1\ri)^2+\f{4\pi N}{\ve V\ve}\\
&=- \l_1e^{g(\up_0+\up_{\l_2})}\le(e^{g(\up_0+\up_{\l_2})}-1\ri)^2+\f{4\pi N}{\ve V\ve}\\
&\,\,\,\,\,+(\l_1-\l_2)e^{g(\up_0+\up_{\l_2})}\le(e^{g(\up_0+\up_{\l_2})}-1\ri)^2\\
&\geq- \l_1e^{g(\up_0+\up_{\l_2})}\le(e^{g(\up_0+\up_{\l_2})}-1\ri)^2+\f{4\pi N}{\ve V\ve},
\end{aligned}\ena
which implies that $\up_{\l_2}$ is the lower solution of (\ref{cs2}) with $\l=\l_1$.  By Lemma 3.1, we know that $\up_{\l_1}\geq \up_{\l_2}$.
We also have
\be\label{mo1}\begin{split}
\D (\up_{\l_2}-\up_{\l_1})&=-\l_2e^{g(\up_0+\up_{\l_2})}\le(e^{g(\up_0+\up_{\l_2})}-1\ri)^2+\l_1e^{g(\up_0+\up_{\l_1})}\le(e^{g(\up_0+\up_{\l_1})}-1\ri)^2\\
&=- \l_1e^{g(\up_0+\up_{\l_2})}\le(e^{g(\up_0+\up_{\l_2})}-1\ri)^2+\l_1e^{g(\up_0+\up_{\l_1})}\le(e^{g(\up_0+\up_{\l_1})}-1\ri)^2\\
&\,\,\,\,\,+(\l_1-\l_2)e^{g(\up_0+\up_{\l_2})}\le(e^{g(\up_0+\up_{\l_2})}-1\ri)^2\\
&>-\l_1\le[e^{g(\up_0+\up_{\l_2})}\le(e^{g(\up_0+\up_{\l_2})}-1\ri)^2-e^{g(\up_0+\up_{\l_1})}\le(e^{g(\up_0+\up_{\l_1})}-1\ri)^2\ri] \\
&=\le[\l_1 e^{g(\xi)}\le( 3e^{g(\xi)}-1\ri)\ri](\up_{\l_2}-\up_{\l_1})\\
&\geq 2\l_1(\up_{\l_2}-\up_{\l_1}),
\end{split}
\ee
where $\up_0+\up_{\l_2}\leq \xi\leq \up_0+\up_{\l_1}$.
If $\up_{\l_1}(x_0)-\up_{\l_2}(x_0)=\min_{x\in V}\{\up_{\l_1}(x)-\up_{\l_2}(x)\}=0$, then by (\ref{mo1}) we have
 $$\D(\up_{\l_1}-\up_{\l_2})(x_0)<0,$$
 which is impossible. Therefore $\up_{\l_{1}}>\up_{\l_2}$ if $\l_1>\l_2>\l_c$.
\end{proof}

\begin{lemma}
Assume that $\up_{\l}$ is a maximal solution of (\ref{cs2}).  We decompose  $\up_{\l}$  as $\up_{\l}=\bar{\up}_{\l}+\up'_{\l}$, where $\bar{\up}_{\l}=\f{1}{\ve V\ve}\int_{V}\up_{\l} d\mu$ and $\up'_{\l}=\up_{\l}-\bar{\up}_{\l}$. Then we get
$$\V \na\up'_{\l}\V_2\leq C\l, $$
where $C$ is a positive constant depending only on $V$.  Furthermore we  derive
$$\ve \bar{\up}_{\l}\ve\leq C(1+\l+\l^2)$$
and
$$\V \up_{\l}\V_{W^{1,2}(V)}\leq C(1+\l+\l^2).$$
\end{lemma}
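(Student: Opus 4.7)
The plan is to carry out three estimates in sequence: first bound $\V\na\upl\V_2$ by testing (\ref{cs2}) against $\upl$, then control $\bar{\up}_{\l}$ from above and below, and finally assemble these to bound $\V\up_\l\V_{W^{1,2}(V)}$. For the gradient estimate I would multiply (\ref{cs2}) by $\upl$ and integrate over $V$. Since $\bar{\up}_\l$ is a constant (so $\Gamma(\bar{\up}_\l,\cdot)=0$) and $\iv\upl\,d\mu=0$, integration by parts collapses the left-hand side to $-\iv\ve\na\upl\ve^2\,d\mu$ and annihilates the constant forcing $4\pi N/\ve V\ve$. Because $h(t)=e^t(e^t-1)^2$ satisfies $0\leq h(t)\leq 4/27$ on $(-\infty,0]$ (exactly as used in Lemma 3.3), a Cauchy--Schwarz estimate followed by the Poincar\'e inequality (Lemma 2.2) applied to the zero-mean $\upl$ yields $\V\na\upl\V_2\leq C\l$ after absorbing one factor of $\V\na\upl\V_2$.

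The upper bound $\bar{\up}_\l<-\bar{\up}_0$ is immediate from integrating the pointwise inequality $\up_0+\up_\l<0$ over $V$. The lower bound is where the work lies. Integrating (\ref{cs2}) produces the constraint
\[
\l\iv e^{g(\up_0+\up_\l)}\le(e^{g(\up_0+\up_\l)}-1\ri)^2\,d\mu = 4\pi N.
\]
The convexity inequality $e^v\geq 1+v$ forces $g(u)\leq u$ for all $u\leq 0$, and combined with $(e^{g(u)}-1)^2\leq 1$ this gives the pointwise bound $h(\up_0+\up_\l)\leq e^{\up_0+\up_\l}$. Hence, since $\up_0$ is bounded on $V$,
\[
\f{4\pi N}{\l}\leq C_0\, e^{\bar{\up}_\l}\iv e^{\upl}\,d\mu.
\]
To control the remaining factor I would apply Lemma \ref{lm3} to $\upl/(C\l)$, which by the previous paragraph has gradient $L^2$-norm at most $1$ and zero mean, combined with Young's inequality $t\leq \ep t^2+1/(4\ep)$ with $\ep=\al/(C\l)^2$. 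This upgrades the $e^{\al u^2}$ bound of Lemma \ref{lm3} into $\iv e^{\upl}\,d\mu\leq Ce^{C'\l^2}$. Taking logarithms in the preceding display then yields $-\bar{\up}_\l\leq \log\l+C'\l^2+C''\leq C(1+\l+\l^2)$.

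The Sobolev bound falls out from the orthogonal decomposition
\[
\V\up_\l\V_{W^{1,2}(V)}^2 = \V\na\upl\V_2^2+\ve V\ve\,\bar{\up}_\l^2+\V\upl\V_2^2,
\]
using Lemma 2.2 once more to convert $\V\na\upl\V_2\leq C\l$ into $\V\upl\V_2\leq C\l$; each summand is then bounded by $C(1+\l+\l^2)^2$. The main obstacle is the Moser--Trudinger step (Lemma \ref{lm3}): it is precisely this step that converts a gradient bound of order $\l$ into an exponential integral bound of order $e^{C'\l^2}$, and this conversion is what forces the $\l^2$ term in the estimate on $\bar{\up}_\l$ and ultimately in the $W^{1,2}$ bound.
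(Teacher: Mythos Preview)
Your argument is correct and follows the same three-step architecture as the paper: test (\ref{cs2}) against $\upl$ and use the Poincar\'e inequality (Lemma 2.2) for the gradient bound, integrate $\up_0+\up_\l<0$ for the upper bound on $\bar{\up}_\l$, and combine the integrated equation with the Moser--Trudinger inequality (Lemma \ref{lm3}) for the lower bound on $\bar{\up}_\l$. The one place you genuinely diverge is in extracting the lower bound. You use the clean pointwise inequality $g(u)\leq u$ for $u\leq 0$ (equivalent to $e^v\geq 1+v$) together with $(e^{g}-1)^2\leq 1$ to obtain directly
\[
e^{g(\up_0+\up_\l)}\bigl(e^{g(\up_0+\up_\l)}-1\bigr)^2\leq e^{\up_0+\up_\l},
\]
so that the integrated identity gives $4\pi N/\l\leq C_0\,e^{\bar{\up}_\l}\iv e^{\upl}\,d\mu$. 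The paper instead first passes to $\l\iv e^{g(\up_0+\up_\l)}\,d\mu>4\pi N$, then argues from $\lim_{t\to-\infty}g(t)/t=1$ that $g(t)\leq t/2$ for $t<-M$, and splits $V=V_1\cup V_2$ according to whether $\up_0+\up_\l<-M$ to obtain $e^{g(\up_0+\up_\l)}\leq e^{(\up_0+\up_\l)/2+2C}$ on all of $V$. Your route is shorter and avoids both the asymptotic analysis of $g$ and the domain decomposition; the paper's route buys a factor $1/2$ in the exponent, but that is immaterial to the conclusion. After this step the two arguments converge: both feed $\iv e^{\upl}\,d\mu$ (or its square root) into Lemma \ref{lm3} via the same Young-inequality trick to produce the $e^{C\l^2}$ factor, and finish identically.
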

\begin{proof}
Since $\up_{\l}=\bar{\up}_{\l}+\up'_{\l}$, multiplying Eq.(\ref{cs2}) by $\upl$ and integrating over $V$, we have
\bna\begin{split}
\V\na\up'_{\l}\V_2^2&=\l\iv e^{g(\up_0+\up_{\l})}\le(e^{g(\up_0+\up_{\l})}-1\ri)^2\upl d\mu\\
&\leq \l\iv \ve\up'_{\l}\ve d\mu\leq C\l\ve V\ve^{1/2}\V\na\upl\V_2,
\end{split}\ena
where we have used Lemma 2.2. Hence
\be\label{na}
\V\na \up'_{\l}\V_2\leq C\l.
\ee
Noting $\up_0+\up_{\l}=\up_0+\bar{\up}_{\l}+\up'_{\l}<0$,
we obtain
\be\label{upb}
\bar{\up}_{\l}<-\f{1}{\ve V\ve}\iv \up_0(x)d\mu,
\ee
which gives an upper bound of $\bar{\up}_{\l}$.

Next we will find a lower bound of $\bar{\up}_{\l}$.  In view of $\up_0+\up_{\l}<0$ and (\ref{cs2}), we have
$$\D \up_{\l}+\l e^{g(\up_0+\up_{\l})}\le(1-e^{g(\up_0+\up_{\l})}\ri)\geq \f{4\pi N}{\ve V\ve}.$$
Integrating the above inequality over $V$, we obtain
 \be\label{lb}\l \iv e^{g(\up_0+\up_{\l})}d\mu\geq \l \iv e^{2g(\up_0+\up_{\l})}d\mu+4\pi N>4\pi N.\ee
Since $g(t)$ is also an increasing function with $\lim\limits_{t\ra -\infty}g(t)=-\infty, $
we have
$$\lim\limits_{t\ra -\infty}\f{g(t)}{t}=\lim\limits_{t\ra -\infty}g'(t)=\lim\limits_{t\ra -\infty}\f{1}{1-e^{g(t)}}=1.$$
Hence there is positive constant $M$ such that
$$g(t)\leq \f{t}{2}, $$
if $t<-M$.
We decompose $V$ as
$V=V_1\cup V_2,$ where
$$V_1=\{x\in V\,|\,\up_0+\up_{\l}<-M\},\,\,\,V_2=\{x\in V\,|\,-M\leq \up_0+\up_{\l}<0\}.$$

Noting that $g(t)$ is bounded on $[-M,0]$, there exists a positive constant $C$ such that
$$\ve g(t)\ve \leq C,$$ if $t\in [-M,0]$.  Here and after we use $C$ to denote a variable constant.

If $x\in V_1$, we have
\bna
e^{g(\up_0+\up_{\l})}\leq e^{\f{\up_0+\up_{\l}}{2}} \leq e^{\f{\up_0+\up_{\l}}{2}+2C}.
\ena

If $x\in V_2$, we enlarge $C$ such that $C\geq \f{M}{2}$, then
\bna
e^{g(\up_0+\up_{\l})}\leq e^{C} \leq e^{\f{\up_0+\up_{\l}}{2}+2C}.
\ena

It follows that
\be\label{lo}
\begin{split}
\iv e^{g(\up_0+\up_{\l})}d\mu &\leq \iv e^{\f{\up_0+\up_{\l}}{2}+2C}d\mu\\
&=e^{2C}e^{\f{\bar{\up}_{\l}}{2}}\iv e^{\f{\up_0+\up'_{\l}}{2}}d\mu\\
&\leq C e^{\f{\bar{\up}_{\l}}{2}} \le(\iv e^{\up_0}d\mu\ri)^{1/2}\le(\iv e^{\up'_{\l}}d\mu\ri)^{1/2}\\
&\leq C e^{\f{\bar{\up}_{\l}}{2}}\le(\iv e^{\up'_{\l}}d\mu\ri)^{1/2}\\
&= C e^{\f{\bar{\up}_{\l}}{2}}\le(\iv e^{\V \na\upl\V_2\f{\up'_{\l}}{\V\na \upl\V_2}}d\mu\ri)^{1/2}\\
&\leq C e^{\f{\bar{\up}_{\l}}{2}}\le(\iv e^{\V \na\upl\V_2^2+\f{|\up'_{\l}|^2}{4\V \na\upl\V_2^2}}d\mu\ri)^{1/2}\\
&\leq  C e^{\f{\bar{\up}_{\l}}{2}}e^{\f{\V \na\upl\V_2^2}{2}},
\end{split}
\ee
where we have used Lemma $\ref{lm3}$.
It follows from that (\ref{lb}) and (\ref{lo}) that

$$e^{\f{\bar{\up}_{\l}}{2}}\geq C\l^{-1}e^{-\f{\V \na\upl\V_2^2}{2}}.$$
This together with (\ref{na}) and (\ref{upb}) yields
$$\ve \bar{\up}_{\l}\ve\leq C(1+\l+\l^2).$$
In addition,
$$\V \up_{\l}\V_{W^{1,2}(V)}\leq C(1+\l+\l^2).$$
\end{proof}

\begin{lemma}
The equation (\ref{cs2}) has a solution at $\l=\l_c$.

\end{lemma}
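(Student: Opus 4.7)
The plan is to recover a solution at $\l=\l_c$ by a compactness argument, obtaining it as the limit of the maximal solutions $\up_\l$ constructed for $\l>\l_c$. Concretely, I would fix a strictly decreasing sequence $\l_n\downarrow \l_c$ with $\l_n>\l_c$ and set $\up_n:=\up_{\l_n}$, where $\up_{\l_n}$ is the maximal solution of (\ref{cs2}) provided by Lemma 3.3.

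Two ingredients do all the work. From Lemma 3.5,
$$\V\up_n\V_{W^{1,2}(V)}\leq C(1+\l_n+\l_n^2),$$
and since $\l_n\in[\l_c,\l_c+1]$ is bounded, the right-hand side is bounded by some constant $C'$ independent of $n$. Because $V$ is a finite set, $W^{1,2}(V)$ is finite-dimensional and this is equivalent to the pointwise bound $|\up_n(x)|\leq C'$ at every vertex $x\in V$. From Lemma 3.4, $\l_n>\l_{n+1}$ forces $\up_n(x)>\up_{n+1}(x)$ for every $x$. Thus $\{\up_n(x)\}$ is a bounded monotone decreasing sequence of real numbers and hence converges to some $\up_*(x)$; on the finite set $V$ this pointwise convergence is the same as convergence in $W^{1,2}(V)$.

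To close the argument I would pass to the limit in the equation. Since $g$ is continuous on $(-\infty,0]$ and $\up_0(x)+\up_n(x)<0$ for every $n$ and $x$, the limit satisfies $\up_0+\up_*\leq 0$, so $g(\up_0+\up_*)$ is well-defined and the nonlinear term $e^{g(\up_0+\up_n)}(e^{g(\up_0+\up_n)}-1)^2$ converges pointwise to $e^{g(\up_0+\up_*)}(e^{g(\up_0+\up_*)}-1)^2$. Because $\D$ is a finite linear combination of pointwise values, taking $n\to\infty$ in
$$\D\up_n=-\l_n e^{g(\up_0+\up_n)}\le(e^{g(\up_0+\up_n)}-1\ri)^2+\f{4\pi N}{\ve V\ve}$$
shows that $\up_*$ satisfies (\ref{cs2}) with $\l=\l_c$.

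The only genuine concern in this plan is preventing the sequence $\up_n$ from escaping to infinity as $\l_n\to\l_c^+$, and this is exactly what the quadratic-in-$\l$ estimate of Lemma 3.5 rules out on the bounded interval $[\l_c,\l_c+1]$. Once that bound is in hand, the finiteness of the graph collapses the compactness step to the elementary monotone convergence of real sequences, and the passage to the limit becomes a routine continuity check.
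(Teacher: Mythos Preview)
Your proposal is correct and follows essentially the same approach as the paper: bound the maximal solutions uniformly via Lemma~3.5 on $(\l_c,\l_c+1)$, use the monotonicity from Lemma~3.4 together with finite-dimensionality of $W^{1,2}(V)$ to extract a pointwise limit, and pass to the limit in the equation by continuity. The only cosmetic difference is that you work with a sequence $\l_n\downarrow\l_c$ while the paper phrases it as $\l\to\l_c$; note also that since $\up_n$ is decreasing you actually get the strict inequality $\up_0+\up_*<0$ (not just $\leq 0$), though your weaker statement suffices because $g$ is defined at $0$.
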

\begin{proof}
Set $\l_c<\l<\l_c+1$. Then Lemma 3.5 implies that $\{\up_{\l}\}$ is bounded in $W^{1,2}(V)$. Noting that $W^{1,2}(V)$ is pre-compact and
$\{\up_{\l}\}$ is increasing with respect to $\l$, we have
$$\up_{\l}\ra \tilde{\up}\,\,\text{in}\,\,W^{1,2}(V),$$
as $\l\ra \l_c$ and
$$\up_0+\tilde{\up}_{\l}<0,\,\,\text{in}\,\,V.$$
Since the convergence of $\{\up_{\l}\}$ is pointwise, we have
$$\D \up_{\l}\ra \D\tilde{\up}$$
and
$$e^{g(\up_0+\up_{\l})}\le(e^{g(\up_0+\up_{\l})}-1\ri)^2\ra e^{g(\up_0+\tilde{\up})}\le(e^{g(\up_0+\tilde{\up})}-1\ri)^2,$$ as $\l\ra \l_c$.
Therefore $\tilde{\up}$ is solution of (\ref{cs2}) and the proof is completed.
\end{proof}

\section{Proof of Theorem 1.2}

Let $\up_0$ be the solution of (\ref{up1}) and $u=\up_0+\up$. Then  we  rewrite
$$\D u=\l e^u(e^u-1)+4\pi\sum\limits_{j=1}^{N}\delta_{p_j}$$
 as
\be\label{mfe}
\D \up=\l e^{\up_0+\up} (e^{\up_0+\up}-1)+\f{4\pi N}{\ve V\ve}.
\ee
Lemma 4.2 and Lemma 4.4 in \cite{HLY} imply that if (\ref{mfe}) has a lower solution, then it has a maximal solution which is negative.
 \begin{lemma}
 Let $\{\up_{\l}\,|\,\l>\l_c\}$ be the family of maximal solutions of (\ref{mfe}). Then we get $\up_{\l_1}>\up_{\l_2}$
if $\l_1>\l_2>\l_c$ .
\end{lemma}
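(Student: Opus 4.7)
The plan is to mirror Lemma 3.4 almost step by step. Set $F(v):=e^{\up_0+v}\le(e^{\up_0+v}-1\ri)$ so that (\ref{mfe}) reads $\D\up=\l F(\up)+\f{4\pi N}{\ve V\ve}$. The crucial structural input, inherited from \cite{HLY}, is that every maximal solution $\up_{\l}$ satisfies $\up_0+\up_{\l}<0$ on $V$, which makes $F(\up_{\l})<0$ pointwise.

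First I would turn $\up_{\l_2}$ into a lower solution of (\ref{mfe}) at parameter $\l_1$. Starting from $\D\up_{\l_2}=\l_2 F(\up_{\l_2})+\f{4\pi N}{\ve V\ve}$ and rewriting
$$\D\up_{\l_2}=\l_1 F(\up_{\l_2})+\f{4\pi N}{\ve V\ve}+(\l_2-\l_1)F(\up_{\l_2}),$$
the trailing term is positive because $\l_2-\l_1<0$ and $F(\up_{\l_2})<0$. Hence $\D\up_{\l_2}\geq \l_1 F(\up_{\l_2})+\f{4\pi N}{\ve V\ve}$, so $\up_{\l_2}$ is a lower solution at $\l_1$. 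By Lemmas 4.2 and 4.4 of \cite{HLY} (the analogues of Lemma 3.1 here), the maximal solution $\up_{\l_1}$ at $\l_1$ dominates any lower solution, so $\up_{\l_1}\geq\up_{\l_2}$ on $V$.

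To upgrade to a strict inequality I would subtract the two equations, apply the mean value theorem $F(\up_{\l_2})-F(\up_{\l_1})=F'(\xi)(\up_{\l_2}-\up_{\l_1})$ with $F'(t)=e^{\up_0+t}(2e^{\up_0+t}-1)$ and $\up_{\l_2}\leq\xi\leq\up_{\l_1}$, and obtain
$$\D(\up_{\l_2}-\up_{\l_1})=\l_1 F'(\xi)(\up_{\l_2}-\up_{\l_1})+(\l_2-\l_1)F(\up_{\l_2})>\l_1 F'(\xi)(\up_{\l_2}-\up_{\l_1}),$$
the strict inequality again coming from the pointwise positivity of $(\l_2-\l_1)F(\up_{\l_2})$. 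If $\up_{\l_1}-\up_{\l_2}$ attained the value $0$ at some $x_0$, then the right-hand side would vanish at $x_0$, forcing $\D(\up_{\l_2}-\up_{\l_1})(x_0)>0$; yet $x_0$ is then a (nonpositive) maximum of $\up_{\l_2}-\up_{\l_1}$, at which $\D(\up_{\l_2}-\up_{\l_1})(x_0)\leq 0$. This contradiction yields $\up_{\l_1}>\up_{\l_2}$ throughout $V$.

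The main obstacle is the sign bookkeeping, which is subtly different from Lemma 3.4: the nonlinearity $\l e^u(e^u-1)$ is negative on $\{u<0\}$, whereas the one in Section 3 is positive. It is precisely this negativity that makes increasing $\l$ drive the right-hand side of (\ref{mfe}) downward, so the lower-solution step works only because the maximal solutions $\up_{\l}$ are already known to be negative. A minor secondary point is that the intermediate value $\xi$ in the mean-value expansion automatically satisfies $\up_0+\xi<0$ by the sandwich $\up_{\l_2}\leq\xi\leq\up_{\l_1}$ and the negativity of both endpoints, so no additional estimate on $F'$ is needed.
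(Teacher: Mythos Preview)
Your proof is correct and follows essentially the same route as the paper: show $\up_{\l_2}$ is a lower solution at $\l_1$ using $F(\up_{\l_2})<0$, invoke maximality to get $\up_{\l_1}\geq\up_{\l_2}$, and then derive a strict Laplacian inequality that contradicts the maximum principle at any point where $\up_{\l_1}-\up_{\l_2}=0$. The only cosmetic difference is that the paper expands $F(\up_{\l_2})-F(\up_{\l_1})$ into $e^{2(\cdot)}$ and $e^{(\cdot)}$ pieces and drops the helpful term before applying the mean value theorem, whereas you apply it directly to $F$; since neither argument needs a sign on $F'$ at the contradiction point, your version is slightly cleaner.
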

\begin{proof}
Suppose $\l_1>\l_2$. Noting $\up_{\l}+\up_0<0$,  we have

\bna\begin{aligned}
\D \up_{\l_2}&=\l_2e^{\up_0+\up_{\l_2}}\le(e^{\up_0+\up_{\l_2}}-1\ri)+\f{4\pi N}{\ve V\ve}\\
&= \l_1e^{\up_0+\up_{\l_2}}\le(e^{\up_0+\up_{\l_2}}-1\ri)+\f{4\pi N}{\ve V\ve}\\
&\,\,\,\,\,+(\l_1-\l_2)e^{\up_0+\up_{\l_2}}\le(1-e^{\up_0+\up_{\l_2}}\ri)\\
&\geq\l_1e^{\up_0+\up_{\l_2}}\le(e^{\up_0+\up_{\l_2}}-1\ri)+\f{4\pi N}{\ve V\ve},
\end{aligned}\ena
which implies that $\up_{\l_2}$ is lower solution of (\ref{mfe}) with $\l=\l_1$.  Hence $\up_{\l_1}\geq \up_{\l_2}$ by Lemma 4.2 in \cite{HLY}.
Calculate
\be\label{mo2}\begin{split}
\D (\up_{\l_2}-\up_{\l_1})&=\l_2e^{\up_0+\up_{\l_2}}\le(e^{\up_0+\up_{\l_2}}-1\ri)-\l_1e^{\up_0+\up_{\l_1}}\le(e^{\up_0+\up_{\l_1}}-1\ri)\\
&= \l_1e^{\up_0+\up_{\l_2}}\le(e^{\up_0+\up_{\l_2}}-1\ri)-\l_1e^{\up_0+\up_{\l_1}}\le(e^{\up_0+\up_{\l_1}}-1\ri)\\
&\,\,\,\,\,+(\l_1-\l_2)e^{\up_0+\up_{\l_2}}\le(1-e^{\up_0+\up_{\l_2}}\ri)\\
&>\l_1\le[e^{\up_0+\up_{\l_2}}\le(e^{\up_0+\up_{\l_2}}-1\ri)-e^{\up_0+\up_{\l_1}}\le(e^{\up_0+\up_{\l_1}}-1\ri)\ri] \\
&=\l_1e^{2\up_0}(e^{2\up_{\l_2}}-e^{2\up_{\l_1}}) -\l_1e^{\up_0}(e^{\up_{\l_2}}-e^{\up_{\l_1}})   \\
&\geq 2\l_1e^{2\up_0+2\xi}(\up_{\l_2}-\up_{\l_1}),
\end{split}
\ee
where $$\up_{\l_2}\leq \xi\leq \up_{\l_1}.$$
Hence
$$\D (\up_{\l_1}-\up_{\l_2})<2\l_1e^{2\up_0+2\xi}(\up_{\l_1}-\up_{\l_2}).$$
Supposing that $\up_{\l_1}(x_0)-\up_{\l_2}(x_0)=\min_{x\in V}\{\up_{\l_1}(x)-\up_{\l_2}(x)\}=0$, then we have
 $$\D(\up_{\l_1}-\up_{\l_2})(x_0)<0,$$
 which contradicts $\D(\up_{\l_1}-\up_{\l_2})(x_0)\geq 0$. Therefore, we conclude that $\up_{\l_{1}}>\up_{\l_2}$ if $\l_1>\l_2>\l_c$.

\end{proof}

\begin{lemma}
For  a maximal solution $\up_{\l}$  of (\ref{mfe}), we decompose  it as $\up_{\l}=\bar{\up}_{\l}+\up'_{\l}$, where $\up'_{\l}=\up_{\l}-\bar{\up}_{\l}$. Then we have
$$\V \na\up'_{\l}\V_2\leq C\l, $$
where $C$ is a positive constant depending only on $V$.  Furthermore we obtain
$$\ve \bar{\up}_{\l}\ve\leq C(1+\l+\l^2)$$
and
$$\V \up_{\l}\V_{W^{1,2}(V)}\leq C(1+\l+\l^2),$$ where $C$ is a positive constant depending only on $G$.
\end{lemma}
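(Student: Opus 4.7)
The plan is to follow the same template as the proof of Lemma 3.5, which is cleaner here because $g$ has been replaced by the identity: in (\ref{mfe}) we deal with $e^{\up_0+\up_{\l}}(e^{\up_0+\up_{\l}}-1)$ directly. Throughout, use that the maximal solution satisfies $\up_0+\up_{\l}<0$, so $0<e^{\up_0+\up_{\l}}<1$, and consequently the nonlinearity is bounded.

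First I would establish the gradient bound. Multiplying (\ref{mfe}) by $\up'_{\l}$, integrating over $V$, and using $\int_V \up'_{\l}\,d\mu=0$ gives
$$\V\na\up'_{\l}\V_2^2=\l\iv e^{\up_0+\up_{\l}}\bigl(1-e^{\up_0+\up_{\l}}\bigr)\up'_{\l}\,d\mu.$$
Since $x(1-x)\le 1/4$ on $(0,1)$, Cauchy--Schwarz followed by the Poincaré inequality (Lemma 2.2) yields $\V\na\up'_{\l}\V_2\le C\l$. Next, since $\up_0+\up_{\l}<0$ on $V$, integrating this inequality (and using $\int_V\up'_{\l}\,d\mu=0$) immediately gives the upper bound $\bar{\up}_{\l}<-\frac{1}{\ve V\ve}\int_V \up_0\,d\mu$.

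The lower bound for $\bar{\up}_{\l}$ is the main step. Integrating (\ref{mfe}) over $V$, the Laplacian term drops out and we obtain
$$\l\iv e^{\up_0+\up_{\l}}\,d\mu=\l\iv e^{2(\up_0+\up_{\l})}\,d\mu+4\pi N>4\pi N.$$
I would then bound $\int_V e^{\up_0+\up_{\l}}\,d\mu$ from above via the decomposition $\up_{\l}=\bar{\up}_{\l}+\up'_{\l}$ and Cauchy--Schwarz:
$$\iv e^{\up_0+\up_{\l}}\,d\mu=e^{\bar{\up}_{\l}}\iv e^{\up_0}e^{\up'_{\l}}\,d\mu\le e^{\bar{\up}_{\l}}\Bigl(\iv e^{2\up_0}\,d\mu\Bigr)^{1/2}\Bigl(\iv e^{2\up'_{\l}}\,d\mu\Bigr)^{1/2}.$$
The first factor is a fixed constant. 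For the second, use $2\up'_{\l}\le \V\na\up'_{\l}\V_2^2+|\up'_{\l}|^2/\V\na\up'_{\l}\V_2^2$ and apply Lemma \ref{lm3} with $\al=1$ to the normalized function $\up'_{\l}/\V\na\up'_{\l}\V_2$ (which has zero mean and unit gradient norm), producing $\int_V e^{2\up'_{\l}}\,d\mu\le C e^{\V\na\up'_{\l}\V_2^2}$. Combining these inequalities yields $e^{\bar{\up}_{\l}}\ge C\l^{-1}e^{-\V\na\up'_{\l}\V_2^2/2}$, hence $\bar{\up}_{\l}\ge -\log\l-\frac{1}{2}\V\na\up'_{\l}\V_2^2-C$. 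Feeding in $\V\na\up'_{\l}\V_2\le C\l$ and using $|\log\l|\le C(1+\l)$ on the range $\l>\l_c>0$ gives $|\bar{\up}_{\l}|\le C(1+\l+\l^2)$.

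Finally, the $W^{1,2}(V)$ bound follows by combining $\V\na\up_{\l}\V_2=\V\na\up'_{\l}\V_2\le C\l$ with $\V\up_{\l}\V_2^2=|V|\bar{\up}_{\l}^2+\V\up'_{\l}\V_2^2\le C(1+\l+\l^2)^2+C\l^2$, again using Poincaré on $\up'_{\l}$. The only place that requires genuine care is the lower bound for $\bar{\up}_{\l}$: getting the Cauchy--Schwarz split clean and matching up exponents correctly in the Moser--Trudinger step is what makes the argument work, and one must remember to work with $e^{2\up'_{\l}}$ (rather than $e^{\up'_{\l}}$) to absorb the $e^{\up_0}$ factor.
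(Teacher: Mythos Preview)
Your proof is correct and follows essentially the same route as the paper's: the gradient bound via testing with $\up'_{\l}$ and Poincar\'e, the upper bound on $\bar{\up}_{\l}$ from $\up_0+\up_{\l}<0$, the lower bound via integrating (\ref{mfe}) combined with the H\"older/Moser--Trudinger estimate on $\int_V e^{\up_0+\up_{\l}}\,d\mu$, and then assembling the $W^{1,2}$ bound. The only cosmetic differences are your use of $2ab\le a^2+b^2$ (giving the slightly sharper exponent $-\V\na\up'_{\l}\V_2^2/2$) versus the paper's $2ab\le 2a^2+b^2/2$, which is immaterial for the final estimate.
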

\begin{proof}

In view of  $\up_{\l}=\bar{\up}_{\l}+\up'_{\l}$,   multiplying Eq.(\ref{mfe}) by $\upl$ and integrating over $V$, we have
\bna\begin{aligned}
\V\na\up'_{\l}\V_2^2&=\l\iv e^{\up_0+\up_{\l}}\le(1-e^{\up_0+\up_{\l}}\ri)\upl d\mu\\
&\leq \l\iv \ve\up'_{\l}\ve d\mu\leq C\l\ve V\ve^{1/2}\V\na\upl\V_2.
\end{aligned}
\ena
It follows that \be\label{na2}
\V \na\up'_{\l}\V_2\leq C\l.
\ee
Using $\up_0+\up_{\l}=\up_0+\bar{\up}_{\l}+\up'_{\l}<0$, we obtain
\be\label{up2}\bar{\up}_{\l}<-\f{1}{\ve V\ve}\iv \up_0(x)d\mu.\ee
Integrating Eq.(\ref{mfe}) with $\up=\up_{\l}$ over $V$, we have
 \be\label{lb2}\l \iv e^{\up_0+\up_{\l}}d\mu= \l \iv e^{2(\up_0+\up_{\l})}d\mu+4\pi N>4\pi N.\ee
 By the H\"{o}lder inequality and Lemma $\ref{lm3}$, we obtain
 \be\label{lo2}
\begin{split}
\iv e^{\up_0+\up_{\l}}d\mu &= \iv e^{\up_0+\bar{\up}_{\l}+\upl}d\mu\\
&=e^{\bar{\up}_{\l}}\iv e^{\up_0+\up'_{\l}}d\mu\\
&\leq  e^{\bar{\up}_{\l}} \le(\iv e^{2\up_0}d\mu\ri)^{1/2}\le(\iv e^{2\up'_{\l}}d\mu\ri)^{1/2}\\
&\leq C e^{\bar{\up}_{\l}}\le(\iv e^{2\up'_{\l}}d\mu\ri)^{1/2}\\
&= C e^{\bar{\up}_{\l}}\le(\iv e^{2\V \na\upl\V_2\f{\up'_{\l}}{\V \na\upl\V_2}}d\mu\ri)^{1/2}\\
&\leq C e^{\bar{\up}_{\l}}\le(\iv e^{2\V \na\upl\V_2^2+\f{|\up'_{\l}|^2}{2\V \na\upl\V_2^2}}d\mu\ri)^{1/2}\\
&\leq  C e^{\bar{\up}_{\l}}e^{\V\na \upl\V_2^2},
\end{split}
\ee
which  together with (\ref{lb2}) leads to
\be\label{lei} e^{\bar{\up}_{\l}}\geq C\l^{-1}e^{-\V \na\upl\V_2^2}.\ee
Combining (\ref{na2}), (\ref{up2}) and (\ref{lei}), we get
$$\ve \bar{\up}_{\l}\ve\leq C(1+\l+\l^2).$$
Moreover, $$\V \up_{\l}\V_{W^{1,2}(V)}\leq C(1+\l+\l^2).$$
\end{proof}
\begin{lemma}
The equation (\ref{mfe}) with $\l=\l_c$ has a solution.

\end{lemma}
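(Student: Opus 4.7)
The plan is to mirror the strategy already successfully carried out in Lemma 3.6 for the generalized equation, adapted to equation (\ref{mfe}). Restrict $\lambda$ to the bounded window $\l_c<\l<\l_c+1$. By Lemma 4.2, the family $\{\up_\l\}$ is then uniformly bounded in $W^{1,2}(V)$, say $\V\up_\l\V_{W^{1,2}(V)}\leq C(1+(\l_c+1)+(\l_c+1)^2)$. Because $G$ is a finite graph, $W^{1,2}(V)$ is finite-dimensional, so this bounded family is pre-compact and in fact all notions of convergence coincide with pointwise convergence.

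The second ingredient is monotonicity. By Lemma 4.1, the assignment $\l\mapsto \up_\l$ is strictly increasing on $(\l_c,\infty)$, so as $\l\downarrow \l_c$ the net $\{\up_\l(x)\}$ is monotonically decreasing at each vertex $x$. Combined with the uniform lower bound from Lemma 4.2 (which controls $|\bar\up_\l|$ and hence prevents $\up_\l$ from escaping to $-\infty$), this forces a well-defined pointwise limit
\be
\tilde\up(x)=\lim_{\l\downarrow \l_c}\up_\l(x),\qquad x\in V,
\ee
and by the $W^{1,2}$-bound we actually have $\up_\l\to\tilde\up$ in $W^{1,2}(V)$ as $\l\to \l_c$. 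Moreover, since $\up_0+\up_\l<0$ for each $\l>\l_c$ and $\tilde\up\leq \up_\l$, we retain $\up_0+\tilde\up\leq 0$ in the limit (strict negativity will follow a posteriori from the equation and Lemma 2.1 if needed, but is not required for the existence argument).

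The final step is to pass to the limit in (\ref{mfe}). Pointwise convergence immediately gives $\D\up_\l(x)\to \D\tilde\up(x)$ for every $x\in V$ (the Laplacian is a finite linear combination of values at neighboring vertices), and the continuity of the nonlinearity yields
\be
e^{\up_0+\up_\l}\bigl(e^{\up_0+\up_\l}-1\bigr)\;\longrightarrow\;e^{\up_0+\tilde\up}\bigl(e^{\up_0+\tilde\up}-1\bigr).
\ee
Sending $\l\to\l_c$ in the identity
\be
\D\up_\l=\l e^{\up_0+\up_\l}(e^{\up_0+\up_\l}-1)+\f{4\pi N}{\ve V\ve}
\ee
produces a genuine solution $\tilde\up$ of (\ref{mfe}) at $\l=\l_c$, which translates back via $u=\up_0+\tilde\up$ to a solution of (\ref{CSH}).

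The only delicate point I anticipate is verifying that the limit $\tilde\up$ is truly admissible (i.e.\ that $\up_0+\tilde\up$ stays in the domain where the nonlinearity is well-behaved, and that the monotone convergence is not merely to $-\infty$ somewhere). Both issues are handled by the uniform $W^{1,2}$ estimate from Lemma 4.2: on a finite graph this gives a uniform pointwise bound, so the monotone decreasing limit exists as a finite function and the whole passage to the limit is routine. Thus the main structural obstacle was already resolved by Lemma 4.2, and the present lemma reduces to a clean compactness-plus-monotone-limit argument.
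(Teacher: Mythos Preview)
Your proposal is correct and follows essentially the same approach as the paper: restrict to $\l_c<\l<\l_c+1$, invoke the $W^{1,2}$-bound of Lemma 4.2 together with the monotonicity from Lemma 4.1 to obtain a pointwise (hence $W^{1,2}$) limit $\tilde\up$, and pass to the limit in (\ref{mfe}) using that the Laplacian and the nonlinearity are continuous under pointwise convergence on a finite graph. Your write-up is in fact more careful than the paper's in justifying pre-compactness via finite-dimensionality and in noting that the limit only gives $\up_0+\tilde\up\leq 0$ a priori.
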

\begin{proof}
Let  $\l_c<\l<\l_c+1$. By  Lemma 4.2, we have $\{\up_{\l}\}$ is bounded in $W^{1,2}(V)$. Since  $W^{1,2}(V)$ is pre-compact and
 $\{\up_{\l}\}$ is increasing with respect to $\l$, we obtain
$$\up_{\l}\ra \tilde{\up}\,\,\text{in}\,\,W^{1,2}(V),$$
as $\l\ra \l_c$ and
$$\up_0+\tilde{\up}_{\l}<0,\,\,\text{in}\,\,V.$$
In view of  the pointwise convergence of $\{\up_{\l}\}$,  we have
$$\D \up_{\l}\ra \D\tilde{\up}$$
and
$$e^{\up_0+\up_{\l}}\le(e^{\up_0+\up_{\l}}-1\ri)\ra e^{\up_0+\tilde{\up}}\le(e^{\up_0+\tilde{\up}}-1\ri), $$as $\l\ra \l_c$.
It is clear that  $\tilde{\up}$ is solution of (\ref{mfe}).
\end{proof}

\vskip 30 pt
\noindent{\bf Acknowledgement}

This work is  partially  supported by the National Natural Science Foundation of China (Grant No. 11721101), and by National Key Research and Development Project SQ2020YFA070080.
\vskip 30 pt

\end{document}